\documentclass{article}
\usepackage{fullpage}
\usepackage{graphicx} % Required for inserting images
\usepackage{amsfonts}
\usepackage{amsthm}
\usepackage{amsmath}
\usepackage{amssymb}
\usepackage{mathabx}
\usepackage{mathrsfs}
\usepackage{mathptmx}
\usepackage{comment}
\usepackage{enumitem}
\usepackage{hyperref}
\usepackage[capitalise]{cleveref}
\usepackage{xcolor}
\usepackage[
backend=biber,
style=alphabetic,
sorting=nyt,
maxnames=99,
doi=false,
url=false,
isbn=false
]{biblatex}
\usepackage{titling}
\usepackage{tikz}
\usetikzlibrary{decorations.pathreplacing}
\usetikzlibrary{calc,arrows.meta}

\newcommand{\Perm}{\mathop{\mathrm{Perm}}}
\newcommand{\Sym}{\mathop{\mathrm{Sym}}}
\newcommand{\ds}{\mathop{d_\square}}

\newcommand{\supp}{\mathop{\mathrm{supp}}}

\newtheorem{theorem}{Theorem}[section]
\newtheorem{lemma}[theorem]{Lemma}
\newtheorem{prop}[theorem]{Proposition}

\theoremstyle{definition}

\newtheorem{remark}[theorem]{Remark}
\newtheorem{example}[theorem]{Example}

\title{Chebyshev centers and radius of the set of permutons}
\thanksmarkseries{alph}
\author{Bal\'azs Maga\thanks{HUN-REN Alfréd Rényi Institute of Mathematics, Budapest, Hungary. Email: \texttt{magab@renyi.hu}
\newline Supported by the NKFIH 152535 project, funded by the Ministry of Innovation and Technology of Hungary from the National Research, Development and Innovation Fund.}}
\date{}

\begin{document}

\maketitle
\begin{abstract}
We study the metric geometry of the set of permutons under the rectangular distance $\ds$.
We determine the Chebyshev radius to be $1/4$ and characterize all Chebyshev centers: a permuton is a center if and only if it is $1/2$-periodic in each coordinate.
We also describe permutons that attain the extremal distance $1/4$ from a given center.

\noindent \textbf{Keywords}: permutons, rectangular distance, Chebyshev center, Chebyshev radius
\end{abstract}

\section{Introduction}

A permuton is a probability measure on the unit square with uniform marginals. Permutons are the natural limit objects of permutations \cite{HOPPEN201393}, where convergence is defined via convergent substructure densities arising from sampling (cf. graphons and graph convergence). Permutons have received much attention in recent years, see for example \cite{ALON2022102361, BASSINO2022108513, GLEBOV2015112, Rahman2019, Jasinska2025}.

We denote the set of permutons by $\Perm$. The convergence of a sequence of permutations $\pi_n\in \Sym(n)$ to $\mu\in\Perm$ can be defined in essentially two equivalent ways. Following the usual approach to limit theories, the standard one is to define the density of subpermutations (or patterns) both in $\pi_n$ and in $\mu$, and say that $\pi_n$ converges to $\mu$ if all pattern densities converge as $n\to\infty$. On the other hand, this turns out to be equivalent to the fact that a sequence of permutons $\widehat{\pi_n}$ naturally associated with $\pi_n$ converges weakly to $\mu$. 

We can make both points of view quantitative. For the second one, we do so by metrizing the topology of weak convergence, for which a simple choice is the \emph{rectangular distance}, the natural analogue of the cut norm for permutation limits, defined by
$$\ds(\mu, \nu)=\max_{R=[a,b]\times [c, d]}|\mu(R)-\nu(R)|.$$
One should note that $\ds$ does not metrize the topology of weak convergence on the space of all probability measures of $[0 ,1]^2$. However, it does so restricted to permutons as they have continuous cumulative distribution functions.

The set $\Perm$ is a convex subset of $\mathcal{P}([0, 1]^2)$, the space of Borel probability measures on $[0, 1]^2$, and can be thought of as an infinite-dimensional analogue of the Birkhoff polytope. In contrast to the rich literature on the geometry of the Birkhoff polytope (e.g. \cite{BRUALDI1977194, Beck2003, Bouthat2025, Bouthat2025_2}), the metric geometry of $\Perm$ appears comparatively underexplored. In this paper we study the Chebyshev radius and centers of $\Perm$ with respect to the metric $\ds$. Informally, what is the smallest ball that covers $\Perm$ and which permutons might serve as centers for this ball of minimal size? 

\subsection{The main result}

The radius of the smallest ball containing a set of a metric space is called its Chebyshev radius, while any valid center of this ball is the Chebyshev center. We say that a permuton is \emph{1/2-periodic} if it is invariant under 1/2-translations along both coordinate axes (understood periodically).

\begin{theorem} \label{thm:main}
    The Chebyshev radius of $\Perm$ in $\ds$ is 1/4, and a permuton is a Chebyshev center if and only if it is 1/2-periodic.
\end{theorem}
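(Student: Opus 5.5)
The upper bound comes from the uniform marginals plus a four-translate averaging argument. The lower bound comes from testing against two explicit permutons on half-squares. The characterization comes from a Fourier argument on the torus $\mathbb{T}^2=(\mathbb{R}/\mathbb{Z})^2$, where I identify $[0,1]^2$ with $\mathbb{T}^2$ throughout.

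\emph{Upper bound.} Let $\nu$ be $1/2$-periodic. I will show $|\mu(R)-\nu(R)|\le 1/4$ for every $\mu\in\Perm$ and every torus rectangle $R=I\times J$, meaning $I,J$ are arcs. This is stronger than the statement, since genuine rectangles are special torus rectangles. Both $\mu$ and $\nu$ have uniform marginals, so
$$\mu(I\times J)=|J|-\mu(I^c\times J),$$
and $I^c$ is again an arc; the same holds for $\nu$ and in the $y$-coordinate. Hence $\mu(R)-\nu(R)$ changes only by a sign when an arc is replaced by its complement. We may therefore assume $w=|I|\le 1/2$ and $h=|J|\le 1/2$.

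Now consider the four translates $R$, $R+(1/2,0)$, $R+(0,1/2)$, $R+(1/2,1/2)$. They are pairwise disjoint up to boundaries, which are $\nu$-null because the marginals are continuous. Their union is $A\times B$ with $|A|=2w$ and $|B|=2h$. By periodicity all four have the same $\nu$-measure, so
$$\nu(R)=\tfrac14\,\nu(A\times B)\in\Bigl[\tfrac14\max(0,2w+2h-1),\ \tfrac14\min(2w,2h)\Bigr].$$
On the other hand $0\le\mu(R)\le\min(w,h)$. Two short inequalities remain. The first is $\min(w,h)-\tfrac14\max(0,2w+2h-1)\le 1/4$, which is immediate: if $w,h\le 1/2$ and $w+h\le 1/2$ then $\min(w,h)\le 1/4$, and otherwise the left side is maximized at $w=h=1/2$. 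The second is $\tfrac12\min(w,h)\le 1/4$. So the radius is at most $1/4$ and every $1/2$-periodic permuton attains it. Such permutons exist, for example the uniform measure or the mixture of two diagonal segments.

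\emph{Lower bound.} Fix any $\nu\in\Perm$ and any $a,c\in[0,1/2]$, and let $Q=[a,a+1/2]\times[c,c+1/2]$, a genuine square. Let $\mu_1$ be the permuton supported on a torus-translate of the diagonal that gives $Q$ mass $1/2$. Let $\mu_2$ be a translate of the antidiagonal that gives $Q$ mass $0$. Then
$$\max\bigl(|1/2-\nu(Q)|,\ \nu(Q)\bigr)\ge 1/4,$$
so the radius is at least $1/4$. Moreover, if $\nu$ is a center then $\nu(Q)=1/4$. By the complement identities above, any torus half-square $[a,a+1/2]\times[c,c+1/2]$ with $a,c\in\mathbb{T}$ has $\nu$-measure expressible through genuine half-squares and marginals, so $\nu(Q)=1/4$ holds for every torus half-square.

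\emph{Characterization, the main step.} I expect deducing periodicity from $\nu(Q_{a,c})=1/4$ for all $a,c$ to be the main obstacle, and I would handle it by Fourier analysis on $\mathbb{T}^2$. The function $(a,c)\mapsto\nu(Q_{a,c})$ is the convolution of $\nu$ with the indicator of the reflected square $[-1/2,0]^2$, so it is constant exactly when $\hat\nu(m,n)\,\widehat{\mathbf 1}_{[0,1/2]^2}(m,n)=0$ for all $(m,n)\ne(0,0)$. The window transform factors as
$$\widehat{\mathbf 1}_{[0,1/2]^2}(m,n)=\widehat{\mathbf 1}_{[0,1/2]}(m)\,\widehat{\mathbf 1}_{[0,1/2]}(n),$$
and $\widehat{\mathbf 1}_{[0,1/2]}(k)=0$ exactly for nonzero even $k$. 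Hence $\hat\nu(m,n)=0$ whenever $m$ or $n$ is odd, except possibly when the other index is $0$. But uniform marginals give $\hat\nu(m,0)=\hat\nu(0,n)=0$ for $m,n\neq 0$. So $\hat\nu$ is supported on $(2\mathbb{Z})^2$, which is exactly invariance of $\nu$ under both $1/2$-translations.

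The delicate points are checking that the torus/genuine-rectangle translation is legitimate, including boundary null sets, and that Fourier uniqueness for measures on $\mathbb{T}^2$ applies. Both are standard.
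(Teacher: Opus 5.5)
Your upper bound (the complement reduction to $w,h\le\tfrac12$, then the four disjoint half-translates) is correct. So is your lower bound on the radius, and so is the deduction that a center gives every toric half-square mass $\tfrac14$. The gap is the converse direction: that information does not imply $\tfrac12$-periodicity, and your Fourier step misreads the zero set of the window.

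Since $\widehat{\mathbf 1}_{[0,1/2]}(k)=0$ for every nonzero even $k$, the product $\widehat{\mathbf 1}_{[0,1/2]}(m)\,\widehat{\mathbf 1}_{[0,1/2]}(n)$ vanishes as soon as \emph{one} index is nonzero even. So constancy of $(a,c)\mapsto\nu(Q_{a,c})$ only yields $\hat\nu(m,n)=0$ when each of $m,n$ is odd or zero. The indices with a zero coordinate are exactly where the multiplier does not vanish, so your appeal to uniform marginals patches a non-issue. Meanwhile, coefficients such as $\hat\nu(1,2)$ stay unconstrained.

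Concretely, take $d\nu=\bigl(1+\tfrac12\cos 2\pi(x+2y)\bigr)\,dx\,dy$. This is a permuton in which every toric half-square has mass exactly $\tfrac14$, because the $y$-integral over any interval of length $\tfrac12$ covers a full period. Yet $\nu$ is not invariant under $x\mapsto x+\tfrac12$. Indeed, $\nu\bigl([\tfrac14,\tfrac14+w]\times[0,1-w]\bigr)=\tfrac14+\tfrac{1}{2\pi}(w-\tfrac12)+O\bigl((w-\tfrac12)^2\bigr)>\tfrac14$ for $w$ slightly above $\tfrac12$, so $\nu$ is not a center.

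What is missing is a richer family of test permutons, plus an argument that turns inequalities into periodicity. For each toric rectangle $R$ with $h+w=1$, the permuton spread uniformly over the two squares $R_{0,1}\cup R_{1,0}$ vanishes on $R$. Hence a center satisfies $\nu(R)\le\tfrac14$ for all such $R$; this is condition (iv) of Lemma~\ref{lemma:1/2_periodicity_equivalence}. These are inequalities, not identities, so Fourier uniqueness alone cannot use them. The paper converts them into periodicity by a local perturbation with Lebesgue differentiation: at a point where the half-line densities are not both $\tfrac12$, it trades a horizontal $\varepsilon$-strip of a half-square for an adjacent vertical one, gaining at first order in $\varepsilon$.

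Your Fourier route can be repaired by a first-variation argument along the same lines. For every nonnegative trigonometric polynomial $\phi$, the function $w\mapsto\int\phi(a,c)\,\nu([a,a+w]\times[c,c+1-w])\,da\,dc$ is smooth and is maximized at $w=\tfrac12$. Its derivative therefore vanishes there. The corresponding multiplier is a product of the transforms of intervals of lengths $w$ and $1-w$. Its $w$-derivative at $w=\tfrac12$ is nonzero whenever one of $m,n$ is odd and the other is nonzero even. This kills exactly the coefficients that the half-square condition leaves free.
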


\begin{remark} \label{remark:witnesses}
    Given Theorem \ref{thm:main}, one can easily find permutons witnessing this radius: a permuton $\nu$ is 1/4 apart from \emph{every} Chebyshev center if there exists a square $Q$ with sides $1/2$ and $\nu(Q)=1/2$. As we discuss through a chain of simple propositions in Section \ref{sec:examples}, these are the only universal witnesses, but some Chebyshev centers have further ones. We note that the existence of universal witnesses indicates a flat nature of the geometry.
\end{remark}

\subsection{Related literature}

While Chebyshev centers are a standard object in convex optimization and computational geometry, often under the name \emph{smallest enclosing ball} \cite{PAN200649, Mordukhovich2013}, they also appear in convex and metric geometry under related notions such as circumradius and Jung-type extremal problems on the ratio of the Chebyshev radius and the diameter \cite{Jung1901, Dekster1995}.

The Chebyshev center and radius of the Birkhoff polytope was studied in \cite{Bouthat2025, Bouthat2025_2} for the metric induced by $\ell^p$ norms and Schatten $p$-norms. Both these families consist of submultiplicative, permutation invariant matrix norms, and the unique Chebyshev center in both cases is the constant doubly stochastic matrix. While the finite dimensional analogue of $\ds$ is also induced by a norm defined by $\|A\| = \max_R \left|\sum_{(i, j)\in R}A_{i, j}\right|$, it is not permutation invariant nor submultiplicative. The lack of permutation invariance is key to the qualitatively different results.

We note that treating a permuton as a Markov kernel from $[0, 1]$ to $[0, 1]$, permutons are bounded linear operators from $L^p([0, 1])$ to itself. Thus in direct analogy to \cite{Bouthat2025} one could study the Chebyshev center and radius of $\Perm$ using the $L^p$ operator norm, however, this would not be natural from the permuton point of view as this norm does not induce the usual weak topology of permutons.

\subsection{The organization of the paper}
In Section \ref{sec:prelim}, we introduce notation and discuss preliminaries concerning 
basic properties of the rectangular metric and some measure theoretic tools. In Section \ref{sec:main} we prove a key lemma leading to the proof of the main theorem. In Section \ref{sec:examples} we discuss Remark \ref{remark:witnesses} in detail, providing a characterization of those $\nu\in\Perm$ for which $\ds(\nu, \mu)=1/4$ for a given Chebyshev center $\mu$.

\section{Preliminaries} \label{sec:prelim}

\subsection{Basic observations about the rectangular metric}

Put $\mathcal{R}$ for the closed axis-aligned rectangles. Denote by $\mathcal{M}([0, 1]^2)$ the set of signed measures on the unit square. This contains the space $\mathcal{P}([0, 1]^2)$; in particular, it contains $\Perm$. For any $\mu\in\mathcal{M}([0, 1]^2)$ on the unit square put 
$$\|\mu\|_{\square}=\max_{R\in\mathcal{R}}|\mu(R)|.$$
This is trivially a norm on the vector space $\mathcal{M}([0, 1]^2)$ and it induces $\ds$. This point of view simplifies certain ideas. For example
\begin{lemma}
    Let $(\mu_i)_{i=1}^{k}, (\nu_i)_{i=1}^{k}$ be elements of $\mathcal{M}([0, 1]^2)$, $\varepsilon>0$, and $\alpha_1, \dots, \alpha_k\geq 0$ with $\sum_{i=1}^{k}\alpha_i=1$.
    Then
    $$\ds(\mu_i, \nu_i) < \varepsilon, \quad i=1, 2, \dots k.$$
    implies
    $$\ds\left(\sum_{i=1}^k\alpha_i\mu_i, \sum_{i=1}^k\alpha_i\nu_i\right)<\varepsilon.$$
\end{lemma}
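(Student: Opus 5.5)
The plan is to use that $\ds$ is induced by the seminorm-like functional $\|\cdot\|_\square$. In that language the claim is just convexity of the ball, i.e. the triangle inequality combined with absolute homogeneity. I will first write
$$\sum_{i=1}^k\alpha_i\mu_i-\sum_{i=1}^k\alpha_i\nu_i=\sum_{i=1}^k\alpha_i(\mu_i-\nu_i),$$
which is legitimate since $\mathcal{M}([0,1]^2)$ is a vector space. Then $\ds\left(\sum_i\alpha_i\mu_i,\sum_i\alpha_i\nu_i\right)=\left\|\sum_i\alpha_i(\mu_i-\nu_i)\right\|_\square$.

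Next I fix an arbitrary rectangle $R\in\mathcal{R}$ and estimate directly:
$$\Bigl|\sum_i\alpha_i(\mu_i-\nu_i)(R)\Bigr|\le\sum_i\alpha_i|\mu_i(R)-\nu_i(R)|\le\sum_i\alpha_i\,\ds(\mu_i,\nu_i).$$
This uses only that each $\alpha_i\ge 0$ and the definition of $\ds$ as a maximum over rectangles.

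Finally I take the maximum over $R$. Since $\max_i\ds(\mu_i,\nu_i)<\varepsilon$ and $\sum\alpha_i=1$, the right-hand side is at most $\max_i\ds(\mu_i,\nu_i)<\varepsilon$. This gives the strict inequality.

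The one point requiring care is strictness after taking the supremum over $R$. The bound $\sum_i\alpha_i\ds(\mu_i,\nu_i)$ is independent of $R$ and is itself strictly below $\varepsilon$, being a convex combination of numbers each less than $\varepsilon$. So no limiting argument can degrade $<$ to $\le$. If one worries whether the maximum over closed rectangles is attained for a general signed measure, the argument works verbatim with a supremum, so attainment is not needed.
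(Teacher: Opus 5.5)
Your proof is correct and follows the paper's argument: rewrite the difference as $\sum_i\alpha_i(\mu_i-\nu_i)$, apply the triangle inequality on each rectangle, and take the supremum over $R$. You bound by the $R$-independent quantity $\sum_i\alpha_i\ds(\mu_i,\nu_i)$ instead of by $\varepsilon$ directly, which is slightly more careful than the paper: it keeps the inequality strict even when the maximum over rectangles is only a supremum, as can happen for general signed measures.
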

\begin{proof}
    $$\left|\sum_{i=1}^k\alpha_i\mu_i(R)-\sum_{i=1}^k\alpha_i\nu_i(R)\right|=\left|\sum_{i=1}^k\alpha_i(\mu_i-\nu_i)(R)\right|\leq \sum_{i=1}^{k}\alpha_i|\mu_i(R)-\nu_i(R)|<\varepsilon,$$
    where the last inequality follows from termwise bounds and $\sum_{i=1}^{k}\alpha_i=1$. Taking supremum concludes the proof.
\end{proof}

\subsection{A toric point of view}

Extending the sides of an axis-aligned rectangle $R\subseteq[0, 1]^2$ we obtain a partition of $[0, 1]^2$ to a geometry-dependent number of rectangles. However, after identifying the unit square $[0, 1]^2$ with the torus $\mathbb{T}^2=(\mathbb{R}/\mathbb{Z})^2$, for every non-degenerate $R$ (i.e., having both sidelengths $h, w$ strictly between 0 and 1) this partition consists of four rectangular pieces with sidelengths $(h, w)$, $(1-h, w)$, $(h, 1-w)$, $(1-h, 1-w)$. Thus every rectangle $R$ is naturally accompanied by three other axis-aligned \emph{toric rectangles}, whose set we denote by $\mathcal{R}_{\mathbb{T}^2}$. We will occasionally denote the rectangles in such a \emph{toric quartet} by $R_{0, 0}:=R, R_{0, 1}, R_{1, 0}, R_{1, 1}$, where $R_{i, j}$ and $R_{1-i, j}$ are sharing vertical sides and $R_{i, j}$ and $R_{i, 1-j}$ are sharing horizontal sides. Generically, three of these pieces are not standard rectangles included in the definition of $\ds$, however, including them does not change its value for permutons, as we establish it in the following proposition, making this point of view very convenient.

\begin{prop} \label{prop:torus_unit_square_identification}
    For any permutons $\mu, \nu$
    \begin{enumerate}[label=(\roman*)]
        \item{For any $k, l$ integers, with the addition understood mod 2
        $$\mu(R_{i, j})-\nu(R_{i, j})=(-1)^{k+l}\left(\mu(R_{i+k, j+l})-\nu(R_{i+k, j+l})\right).$$}\label{item1:prop:torus_unit_square_identification}
        \item The value of $|\mu(R_{i, j})-\nu(R_{i, j})|$ is independent of $i, j$.
        \item $$\ds(\mu, \nu)=\max_{R\in\mathcal{R}_{\mathbb{T}^2}}|\mu(R)-\nu(R)|.$$
    \end{enumerate}
\end{prop}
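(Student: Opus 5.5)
The plan is to work with the signed measure $\sigma=\mu-\nu$ and use only two of its features: it has zero marginals, and it gives no mass to vertical or horizontal lines. Both $\mu$ and $\nu$ have uniform marginals, so for every Borel $A\subseteq[0,1]$ we have $\sigma(A\times[0,1])=0$ and $\sigma([0,1]\times A)=0$. Because the marginals are atomless, $\sigma(\{x\}\times[0,1])=0$ and $\sigma([0,1]\times\{y\})=0$ for all $x,y$. Consequently boundaries of rectangles are $\sigma$-null, so closed, open and half-open versions of a rectangle all carry the same $\sigma$-mass. This is what lets us treat the toric pieces, which are closed on the torus but may be unions of pieces of $[0,1]^2$ with shared boundary, as if they were disjoint.

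For (i), it suffices to handle $(k,l)=(1,0)$ and $(0,1)$; the general case follows by composing these and reducing mod 2. Take the pair $R_{i,j}$ and $R_{1-i,j}$, which share vertical sides. On the torus their union is a horizontal band $[0,1]\times J$, where $J$ is the common vertical toric interval (a single interval or the union of two intervals touching $0$ and $1$). Their intersection is contained in two vertical segments, which are $\sigma$-null. Hence
$$\sigma(R_{i,j})+\sigma(R_{1-i,j})=\sigma([0,1]\times J)=0,$$
using the zero second marginal. This gives $\sigma(R_{i,j})=-\sigma(R_{1-i,j})$. The argument for horizontal neighbours is symmetric and uses the first marginal. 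Iterating yields the sign $(-1)^{k+l}$.

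Part (ii) is immediate from (i) by taking absolute values. For (iii), every standard rectangle $R\subseteq[0,1]^2$ is itself a toric rectangle, so the right-hand side is at least $\ds(\mu,\nu)$. For the converse, given any toric rectangle $T$, view it as a member of a toric quartet. That quartet can be generated from a standard rectangle $R$: the toric rectangle formed by the same cut points, chosen so that it does not wrap around, i.e. the piece $[a,b]\times[c,d]$ with $a\le b$ and $c\le d$ taken in $[0,1]$. By (ii), $|\sigma(T)|=|\sigma(R)|$, which gives equality. The max is attained since $\ds$ is a max over standard rectangles.

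The main obstacle is bookkeeping in degenerate cases: sides of length $0$ or $1$, or rectangles touching the boundary of $[0,1]^2$, where the quartet degenerates. I would handle these by noting that a side of length $0$ gives a $\sigma$-null set, and a side of length $1$ gives a full band of $\sigma$-measure $0$. So in these cases all four values are $0$, consistent with (i) and (ii). Measurability and the convention for the wrap-around intervals I would settle once, by using half-open representatives.
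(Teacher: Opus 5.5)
Your proposal is correct and follows essentially the same route as the paper. Part (i) comes from two horizontally adjacent quartet members filling a full band whose mass is fixed by the uniform marginal, (ii) is immediate, and (iii) holds because every toric quartet contains a standard rectangle. Your explicit treatment of null boundaries and of degenerate side lengths just spells out what the paper leaves implicit.
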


\begin{proof}
    By symmetry, for (i), it is sufficient to check what happens upon replacing $i$ by $i+1$. By the uniform marginal property,
    $$\mu(R_{i, j})+\mu(R_{i+1, j})=\text{height of $R_{i, j}$}=\nu(R_{i, j})+\nu(R_{i+1, j}),$$
    which can be rearranged in accordance with the statement.

    (ii) follows trivially from (i).

    For (iii), observe that for every $R\in \mathcal{R}_{\mathbb{T}^2}$, $R_{i, j}\in \mathcal{R}$ for some $i, j$, i.e., it is a standard rectangle. Using (ii), this implies that the supremum of differences over $\mathcal{R}_{\mathbb{T}^2}$ is the same as the supremum of differences over $\mathcal{R}$.
\end{proof}

On $\mathbb{T}^2$, between any two points there are infinitely many line segments. We would like to select one canonically. Parameterizing $\mathbb{T}^2$ by $[0, 1)^2$, we apply the convention that by the line segment $(x, y_1), (x, y_2)$ we mean the vertical line segments containing $(x, y_1+\varepsilon)$ for small enough $\varepsilon>0$. (That is, the role of the two endpoints is not symmetric.) We proceed analogously for horizontal line segments. Finally, a toric rectangle with opposite corners $(x_1, y_1), (x_2, y_2)$, is the unique rectangle with line segments  $[(x_1, y_1), (x_1, y_2)], [(x_1, y_1), (x_2, y_1)]$ as its sides. (Without such a specification, we could mean any rectangle in the same toric quartet.)

\subsection{Decomposition of measures}

To prove Theorem \ref{thm:main}, we will need some tools from measure theory describing the local magnitude of a measure.

We denote by $\lambda$ the $d$-dimensional Lebesgue measure. Given a $d$-dimensional Borel measure $\mu$, its Lebesgue decomposition with respect to $\lambda$ is
$\mu = \mu_{ac}+\mu^{\perp}$ with $\mu_{ac}\ll \lambda$, $\mu^{\perp} \perp \lambda$. Writing $f=\frac{d\mu_{ac}}{d\lambda}$ for the Radon--Nikodym derivative of $\mu_{ac}$ with respect to $\lambda$, the Lebesgue--Radon--Nikodym representation of $\mu$ with respect to $\lambda$ is understood as $d\mu = fd\lambda + d\mu^{\perp}$.

 Moreover, we say that a family $E_r$ of Borel subsets of $\mathbb{R}^d$ \emph{shrinks nicely} to $x\in\mathbb{R}^d$ if
\begin{itemize}
    \item $E_r\subseteq B(x, r)$ for each $r$, where $B(x, r)$ is the $r$-ball centered at $x$;
    \item there is a constant $\alpha>0$ such that for every $r$
    $$\lambda(E_r)\geq \alpha\lambda(B(x, r)).$$
\end{itemize}
Note that we do not require $x\in E_r$. For example, balls of any $p$-norm centered at the origin shrink nicely to origin, but so do their intersections with the positive orthant. (For $d=1$, it simply means half-intervals.) 

We use the following form of Lebesgue's differentiation theorem:

\begin{theorem}[{\cite[Theorem~3.22]{Folland}}]  \label{thm:lebesgue_differentiation}
    Let $\nu$ be a regular Borel measure, and let $d\nu=d\nu^{\perp}+fd\lambda$ be its Lebesgue--Radon--Nikodym representation. Then for $\lambda$-almost every $x\in \mathbb{R}^d$.
    \begin{equation}\label{eq:lebesgue_differentiation}
    \lim_{r\to 0}\frac{\nu(E_r)}{\lambda(E_r)}\to f(x)
    \end{equation}
    for every family $\{E_r\}_{r>0}$ which shrinks nicely to $x$.
\end{theorem}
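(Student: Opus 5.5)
The plan is the classical route via Lebesgue points and the Hardy--Littlewood maximal inequality. The first step reduces the problem to balls. If $E_r\subseteq B(x,r)$ and $\lambda(E_r)\geq \alpha\lambda(B(x,r))$, then
$$\left|\frac{\nu(E_r)}{\lambda(E_r)}-f(x)\right|\leq \frac{1}{\alpha\lambda(B(x,r))}\int_{B(x,r)}|f(y)-f(x)|\,d\lambda(y)+\frac{\nu^{\perp}(B(x,r))}{\alpha\lambda(B(x,r))}.$$
To see this, split $\nu(E_r)=\nu^\perp(E_r)+\int_{E_r} f\,d\lambda$, write $\int_{E_r}f\,d\lambda-f(x)\lambda(E_r)=\int_{E_r}(f-f(x))\,d\lambda$, and enlarge $E_r$ to $B(x,r)$ in both numerators. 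The key feature is that the right-hand side no longer depends on the family $\{E_r\}$. It therefore suffices to prove two facts, each for $\lambda$-a.e.\ $x$:
\begin{enumerate}[label=(\alph*)]
\item $x$ is a Lebesgue point of $f$, i.e.\ the averages of $|f-f(x)|$ over $B(x,r)$ tend to $0$;
\item $\nu^\perp(B(x,r))/\lambda(B(x,r))\to 0$.
\end{enumerate}
Regularity of $\nu$ makes it locally finite. Hence $f\in L^1_{loc}$, and since both claims are local, we may restrict to a bounded region and assume $f\in L^1$ and $\nu^\perp$ finite.

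For (a), the main tool is the weak-type $(1,1)$ bound for the maximal function
$$Hg(x)=\sup_r \frac{1}{\lambda(B(x,r))}\int_{B(x,r)}|g|\,d\lambda,$$
namely
$$\lambda(\{Hg>t\})\leq \frac{3^d}{t}\|g\|_1 .$$
This comes from the Vitali covering lemma: from any finite family of balls one can extract a disjoint subfamily whose threefold enlargements cover the union. One applies it to a compact subset of $\{Hg>t\}$ and uses inner regularity of $\lambda$. Next, approximate $f$ in $L^1$ by a continuous compactly supported $g$; for $g$ the limit statement is trivial. Then
$$\limsup_{r\to 0}\frac{1}{\lambda(B(x,r))}\int_{B(x,r)}|f-f(x)|\,d\lambda\leq H(f-g)(x)+|f-g|(x).$$
By the maximal inequality and Chebyshev's inequality, the set where this exceeds $\varepsilon$ has measure $O(\|f-g\|_1/\varepsilon)$, which can be made arbitrarily small. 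Hence it is null, and letting $\varepsilon\to0$ along a sequence gives (a).

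For (b), fix $\varepsilon>0$ and a $\lambda$-null Borel set $N$ carrying $\nu^\perp$. Let
$$A_\varepsilon=\Bigl\{x\notin N:\ \limsup_{r\to 0}\nu^\perp(B(x,r))/\lambda(B(x,r))>\varepsilon\Bigr\}.$$
For $\delta>0$, outer regularity of the finite measure $\nu^\perp$ gives an open $U\supseteq N$ with $\nu^\perp(U)<\delta$. Each $x\in A_\varepsilon$ lies outside $N$, but the sets $A_\varepsilon$ and $N$ need not be separated, so we intersect with $U^c$ differently: apply outer regularity to the complement, or, equivalently, since $\nu^\perp(U^c)=0$, take $U$ to be an open set containing $N$ with $\nu^\perp(U\setminus N)$ small. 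What is actually needed is an open set $V\supseteq A_\varepsilon$ with $\nu^\perp(V)<\delta$. This exists because $\nu^\perp(A_\varepsilon)=0$ (as $A_\varepsilon\cap N=\emptyset$) and $\nu^\perp$ is outer regular. Each $x\in A_\varepsilon$ has arbitrarily small balls $B\subseteq V$ with $\lambda(B)<\nu^\perp(B)/\varepsilon$. Applying the Vitali lemma to a compact $K\subseteq A_\varepsilon$ gives
$$\lambda(K)\leq 3^d\sum\lambda(B_i)\leq \frac{3^d}{\varepsilon}\,\nu^\perp(V)<\frac{3^d\delta}{\varepsilon}.$$
So $A_\varepsilon$ is $\lambda$-null, and (b) follows on the complement of $N\cup\bigcup_n A_{1/n}$.

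The main obstacle is the covering argument behind the maximal inequality, which is used in both (a) and (b). Everything else is a routine approximation. The only delicacy is in (b): choosing the open set around $A_\varepsilon$ rather than around $N$, so that the Vitali balls have small $\nu^\perp$-mass.
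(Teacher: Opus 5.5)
Your proposal is correct and is essentially the standard proof in Folland (Theorem 3.22), which the paper cites without reproducing. As there, you reduce to centered balls and a positive singular part, handle $f\,d\lambda$ through Lebesgue points and the Hardy--Littlewood maximal inequality, and dispose of $\nu^\perp$ with a Vitali covering inside an open set of small $\nu^\perp$-mass containing the $\lambda$-full set $N^c$ on which $\nu^\perp$ vanishes. Delete the false intermediate claim that there is an open $U\supseteq N$ with $\nu^\perp(U)<\delta$, which is impossible once $\delta<\nu^\perp(N)$. Take $V$ open around $N^c\supseteq A_\varepsilon$ directly, and apply the Vitali lemma to compact subsets of the open union of your chosen balls rather than of $A_\varepsilon$, which avoids having to check that $A_\varepsilon$ is measurable.
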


We call such an $x$ a \emph{differentiability point} of $\mu$. We note that while the pointwise value $f(x)$ makes no sense a priori, in differentiability points it is natural and convenient to define it using \eqref{eq:lebesgue_differentiation}. Below we will use the differentiation theorem for permutons and certain one-dimensional measures associated with a given permuton, i.e., for $d=1, 2$.

\section{Proof of the main result} \label{sec:main}

The following lemma provides several equivalent geometric and metric characterizations of 1/2-periodicity, which will be crucial in identifying the Chebyshev centers.

\begin{lemma}\label{lemma:1/2_periodicity_equivalence}
    The following are equivalent for a permuton $\mu$ (below $w, h$ are always the sidelengths of the corresponding rectangle):
    \begin{enumerate}
        \item[(i)] $\mu$ is 1/2-periodic.
        \item[(ii)] for every toric rectangle $R$, if $h=1/2$ or $w=1/2$, then $\mu(R)=wh$.
        %\item for any standard rectangle $R$, if $h=1/2$ or $w=1/2$, then $\mu(R)=wh$.
        \item[(iii)] for every toric rectangle $R$ with $h+w\geq 1$ so that $w\leq 1/2$ or $h\leq 1/2$,
        $$\mu(R)\leq \frac{h+w}{2}-\frac{1}{4}.$$
        \item[(iv)] for every toric rectangle $R$ with $h+w=1$,
        $$\mu(R)\leq 1/4.$$
    \end{enumerate}
\end{lemma}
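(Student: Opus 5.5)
The plan is to prove the cycle (i)$\Rightarrow$(ii)$\Rightarrow$(iii)$\Rightarrow$(iv)$\Rightarrow$(i). For the step back to (i), I would first go through (ii): every ``return'' argument will pass through (ii), and (ii)$\Rightarrow$(i) is easy.

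\textbf{(i)$\Rightarrow$(ii).} Let $R$ be a toric rectangle with $h=1/2$ and horizontal side $I$ of length $w$. Translating $R$ vertically by $1/2$ gives a toric rectangle $R'$. By 1/2-periodicity $\mu(R')=\mu(R)$. The sets $R$ and $R'$ tile the toric strip $I\times\mathbb{T}$, which has $\mu$-mass $w$ by the uniform marginal. Hence $\mu(R)=w/2=wh$, and the case $w=1/2$ is symmetric.

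\textbf{(ii)$\Rightarrow$(i).} Let $A=[a,b]\times I$ with $b-a\le 1/2$. By (ii),
$$\mu([a,a+1/2]\times I)=|I|/2=\mu([b,b+1/2]\times I).$$
Removing the common part $[b,a+1/2]\times I$ from both gives $\mu(A)=\mu(A+(1/2,0))$. Rectangles generate the Borel sets, so $\mu$ is invariant under this translation. The vertical translation is handled the same way.

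\textbf{(ii)$\Rightarrow$(iii).} Assume without loss of generality $w\le 1/2$. Then $h\ge 1-w\ge 1/2$. Split $R$ into a $w\times 1/2$ rectangle, which has mass $w/2$ by (ii), and a $w\times(h-1/2)$ rectangle $S$. Since $w\le 1/2$, $S$ lies inside a toric $1/2\times(h-1/2)$ rectangle, whose mass by (ii) is $(h-1/2)/2$. Monotonicity then gives
$$\mu(R)\le \frac w2+\frac{h}{2}-\frac14,$$
as required.

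\textbf{(iii)$\Rightarrow$(iv).} This is immediate: when $h+w=1$, one side is at most $1/2$, and the bound becomes $1/4$.

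\textbf{(iv)$\Rightarrow$(ii).} This is the main obstacle.

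First apply (iv) to $1/2\times1/2$ squares. A toric quartet of such squares has total mass $1$, and each square has mass at most $1/4$. So every $1/2$-square has mass exactly $1/4$.

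Next, compare squares shifted by $\varepsilon$ vertically. This shows that for width-$1/2$ strips,
$$\mu([x,x+1/2]\times[y,y+\varepsilon])=\mu([x,x+1/2]\times[y+1/2,y+1/2+\varepsilon]).$$
The symmetric statement holds for height-$1/2$ columns. This is not yet enough, so I would bring in the non-square rectangles $(1/2+\varepsilon)\times(1/2-\varepsilon)$ from (iv).

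Such a rectangle is a $1/2\times(1/2-\varepsilon)$ block plus a thin column of width $\varepsilon$. Also consider the complementary members of its toric quartet, using Proposition~\ref{prop:torus_unit_square_identification}(i) with $\nu=\lambda$, the uniform permuton, so that the deviations $\mu(R_{i,j})-\lambda(R_{i,j})$ agree up to sign. Writing $g(h)=\mu([x,x+1/2]\times[c,c+h])-h/2$, this should yield one-sided inequalities on the increments of $g$ in terms of the $\mu$-mass of thin columns and strips.

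Dividing by $\varepsilon$ and letting $\varepsilon\to0$ at differentiability points, via Theorem~\ref{thm:lebesgue_differentiation} applied to $\mu$ and to its one-dimensional restrictions to segments, I expect these inequalities to force the density of $\mu$ on $[x,x+1/2]\times\{y\}$ to equal that on $[x+1/2,x+1]\times\{y\}$ for almost every $y$. The marginal then forces each to be $1/2$. I expect this to also control the singular part, since a singular part would produce an increment violating the one-sided bound. Integrating over $y$ then gives $g\equiv0$, which is (ii).

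I expect the delicate part to be this limiting argument, in particular handling $\mu^\perp$.
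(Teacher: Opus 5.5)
Your arguments for (i)$\Leftrightarrow$(ii), (ii)$\Rightarrow$(iii) and (iii)$\Rightarrow$(iv) are correct and essentially the paper's. So are your first two observations toward (iv)$\Rightarrow$(ii): every $1/2$-square has mass $1/4$, so $A\mapsto\mu([x,x+1/2]\times A)$ is $1/2$-periodic.

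The gap is in the only nontrivial implication. You pick the right test rectangles, of sides $(1/2+\varepsilon)\times(1/2-\varepsilon)$, but then only say what you ``expect''. You never state the inequality these rectangles produce, or how it pins the densities to $1/2$. What is needed is the following. Take $R=[x,x+1/2]\times[y,y+1/2]$, the inner strip $R_-=[x,x+1/2]\times[y,y+\varepsilon]$, the outer column $R_+=[x-\varepsilon,x]\times[y,y+1/2]$ and the corner $R_\ast=[x-\varepsilon,x]\times[y,y+\varepsilon]$. The rectangle $\widetilde R=[x-\varepsilon,x+1/2]\times[y+\varepsilon,y+1/2]$ then satisfies
\[
\mu(\widetilde R)=\mu(R)-\mu(R_-)+\mu(R_+)-\mu(R_\ast),
\]
so $\mu(R)=1/4$ and (iv) give $\mu(R_+)\le\mu(R_-)+\mu(R_\ast)$.

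The measures to differentiate are $\mu_{x+}(A)=\mu([x,x+1/2]\times A)$ and $\mu^{y+}(A)=\mu(A\times[y,y+1/2])$, together with their ``$-$'' versions. They are not restrictions of $\mu$ to segments. On a full-measure set $D$ obtained by Fubini, dividing by $\varepsilon$ yields $f^{y+}(x)\le f_{x+}(y)$.

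The paper then argues by contradiction. If the densities are not all $1/2$, it takes a point of $D$ with $f^{y+}(x)>1/2$ (after symmetries). Using $f_{x+}+f_{x-}=1$, it chooses a side with $f_{x\pm}(y)\le 1/2$, contradicting the inequality. Your equality route also works once written out. The transposed configuration (inner column, outer strip) gives the reverse inequality, and the mirrored corner gives $f^{y+}(x)=f_{x-}(y)$. Hence $f_{x+}(y)=f_{x-}(y)=1/2$.

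Your remark on the singular part points the wrong way. The measures $\mu_{x\pm},\mu^{y\pm}$ are dominated by $\lambda$ through the marginals, so they have no singular part at all. The singular part $\mu^\perp$ enters only through $R_\ast$, and there it sits on the side that \emph{weakens} the inequality $\mu(R_+)\le\mu(R_-)+\mu(R_\ast)$. Singular mass concentrated near $(x,y)$ would therefore break the argument rather than be excluded by it. The fix is to require $(x,y)$ to be a Lebesgue point of $\mu$ itself, so that $\mu(R_\ast)=O(\varepsilon^2)=o(\varepsilon)$. This is exactly why the paper's set $D$ includes the two-dimensional differentiability condition.
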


\begin{proof}
    \underline{\bf (i) $\Rightarrow$ (ii):} without loss of generality, assume $h=1/2$. For a rectangle $R$, put $R'=R+(0, 1/2).$ Then $R\cup R'$ equals a full vertical strip of width $w$, and $\mu(R)=\mu(R')$, thus $2\mu(R)=w$. (Due to $\mu$ having uniform marginals, we do not have to bother with the boundaries.)

    \underline{\bf (ii) $\Rightarrow$ (i):} due to space homogeneity and the symmetry of the coordinates, it suffices to prove that for every measurable set $A\subseteq[0, 1/2]^2$, we have
    $$\mu(A)=\mu(A+(0, 1/2))=:\mu'(A).$$
    As axis-aligned rectangles in $[0, 1/2]^2$ generate the Borel sigma-algebra of $[0, 1/2]^2$, it suffices to show that $\mu, \mu'$ coincides for axis-aligned rectangles, i.e., $\mu(R)=\mu(R')$ with the notation of the previous section.
    
    Consider a rectangle $R=[x_1, x_2]\times [y_1, y_2]\subseteq [0,1/2]^2$, define $R'$ as above and put $R^*=[x_1, x_2]\times [y_2, y_1+1/2]$. Then
    $$\mu(R\cup R^*)=\frac{x_2-x_1}{2}=\mu(R^*\cup R'),$$
    and thus $\mu(R)=\mu(R')$, as desired.

    \underline{\bf (ii) $\Rightarrow$ (iii):} without loss of generality, assume $h\leq 1/2$. Translating the measures and $R$ as well, we might assume that $R=[0, w]\times [0, h]$. Then we have
    $$R\subseteq \left([0, 1/2]\times [0, h]\right)\cup\left([1/2, w]\times [0, 1/2]\right),$$
    thus by (ii)
    \begin{equation}\label{eq:mu_of_R}
    \mu(R)\leq \frac{h}{2}+\frac{1}{2}\left(w-\frac{1}{2}\right)=\frac{h+w}{2}-\frac{1}{4}.
    \end{equation}

    \underline{\bf (iii) $\Rightarrow$ (ii):}:Without loss of generality, $R$ is such that $h=1/2$. Then by (iii), $\mu(R)\leq x/2$, and taking its toric quartet, also $\mu(R_{1, 0})\leq \frac{1-x}{2}$. However, by uniform marginals, the sum of these inequalities must be saturated, thus actually both of them holds with equality.

    \underline{\bf (iii) $\Rightarrow$ (iv):} Immediate, (iv) is a special case of (iii).

    \underline{\bf (iv) $\Rightarrow$ (i):}
    For any $x, y\in [0, 1]$, consider the one-dimensional measures $\mu_{x+}, \mu_{x-}, \mu^{y+}, \mu^{y-}$ defined by
    $$\mu_{x-}(A)=\mu([x-1/2, x]\times A), \qquad \mu_{x+}(A)=\mu([x, x+1/2]\times A).$$
    $$\mu^{y-}(A)=\mu(A\times [y-1/2, y]), \qquad \mu^{y+}(A)=\mu(A\times [y, y+1/2]).$$
    Note that by applying \cref{thm:lebesgue_differentiation} to $\mu_{x\pm}, \mu^{y\pm}, \mu$ respectively, we have
    \begin{itemize}
        \item for every $x$ Lebesgue almost every $y$ is a differentiability point of $\mu_{x\pm}$;% i.e., $y\in\mathcal{D}(\mu_x)$;
        \item for every $y$ Lebesgue almost every $x$ is a differentiability point of $\mu^{y\pm}$;%i.e., $x\in\mathcal{D}(\mu^y)$;
        \item Lebesgue almost every $(x, y)$ is a differentiability point of $\mu$. % i.e., $(x,y)\in\mathcal{D}(\mu)$.
    \end{itemize}
    Applying Fubini's theorem for the first two pairs of these observations, we get that Lebesgue almost every $(x, y)$ satisfies all these conclusions. Denote the full measure set of such points by $D$.
    
    As $\mu$ has uniform marginals, $\mu_{x\pm}$ and $\mu^{y\pm}$ are absolutely continuous with respect to the 1-dimensional Lebesgue measure. Denote their densities by $f_{x\pm}, f^{y\pm}$ respectively. Again due to the uniform marginal property,
    $$f_{x-}+f_{x+}=1, f^{y-}+f^{y+}=1 $$
    for almost every $x, y$, with the equalities holding pointwise for points of $D$.
    
    If both $f_{x+}\equiv1/2, f^{y+}\equiv 1/2$ almost everywhere for almost every choice of $x, y$, then we have $\mu_x(A)=\mu^{y}(A)=\frac{\lambda(A)}{2}$ for every $x, y$, and hence $\mu$ is 1/2-periodic, we are done. Assume that it is not the case. Then without loss of generality, for some $X$ with $\lambda(X)>0$, for every $x\in X$ we have that $f^{y+}(x)\neq 1/2$ for $y\in Y_x$ with $\lambda(Y_x)>0$. 
    %As for every $x$ almost every $y$ is a differentiability point of $\mu_x$, we can even assume that each $Y_x$ consists of differentiability points of $\mu_x$ exclusively. 
    Then for $$B=\bigcup_{x\in[0,1]} \{x\}\times Y_{x}\subseteq [0, 1]^2,$$
    $\lambda(B)>0$. Even $\lambda(D\cap B)>0$.

    Pick a point $(x, y)\in D\cap B$, without losing generality assume that $f^{y+}(x)>1/2$. Either $f_{x-}(y)$ or $f_{x+}(y)$ is at most 1/2, without loss of generality assume $f_{x+}(y)\leq 1/2$. Consider the line segments
    $$I=[(x, y), (x+1/2, y)], \quad J=[(x, y), (x, y+1/2)].$$
    
    Let $R$ be the toric rectangle with both sidelengths equal to 1/2 and having sides $I, J$. For some $\varepsilon>0$ to be fixed later, denote moreover by $R_{+}$ the $\varepsilon$-width vertical strip adjacent to $J$ outside of $R$ and by $R_{-}$ the $\varepsilon$-height horizontal strip adjacent to $I$ inside of $R$. (See Figure \ref{fig:chebyshev_center}.)
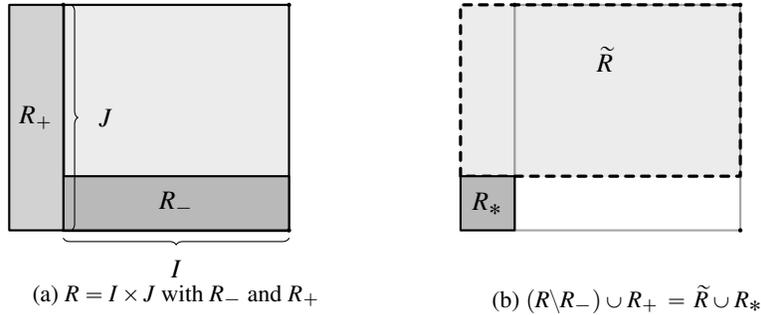
\begin{figure}[h]
\centering
\begin{tikzpicture}[x=6cm,y=6cm,>=Latex, line cap=round, line join=round]

% Parameters
\def\eps{0.12} % epsilon as fraction of the unit (relative to the 1x1 canvas)
\def\half{0.5}

% ---------- LEFT PANEL ----------
\begin{scope}[shift={(0,0)}]

  \fill[gray!15] (0,0) rectangle (\half,\half);
  \draw[thick] (0,0) rectangle (\half,\half);

  \draw[very thick] (0,0) -- (\half,0);%node[midway,below=2pt] {$I$};
  \draw[very thick] (0,0) -- (0,\half);%node[midway,left=2pt] {$J$};

  \fill[gray!55] (0,0) rectangle (\half,\eps);
  \draw[thick] (0,0) rectangle (\half,\eps);
  \node at ({0.25}, {0.5*\eps}) {$R_{-}$};

  \fill[gray!35] (-\eps,0) rectangle (0,\half);
  \draw[thick] (-\eps,0) rectangle (0,\half);
  \node at ({-0.5*\eps}, {0.25}) {$R_{+}$};
\draw[decorate,decoration={brace,mirror,raise=3pt}]
  (0,0) -- (\half,0)
  node[midway,below=8pt] {$I$};

\draw[decorate,decoration={brace,mirror, raise=3pt}]
  (0,0) -- (0,\half)
  node[midway,right=10pt] {$J$};

  \fill (\half,0) circle (0.8pt);
  \fill (0,\half) circle (0.8pt);
  \fill (\half,\half) circle (0.8pt);

  \node[below] at ({0.25}, {-0.10}) {\small (a) $R=I\times J$ with $R_-$ and $R_+$};
\end{scope}

% ---------- RIGHT PANEL ----------
\begin{scope}[shift={(1,0)}]

  \fill[gray!15] (-\eps,\eps) rectangle (\half,\half);

  \draw[very thick, dashed] (-\eps,\eps) rectangle (\half,\half);
  \node at ({0.20}, {0.38}) {$\widetilde{R}$};

  \fill[gray!55] (-\eps,0) rectangle (0,\eps);
  \draw[thick] (-\eps,0) rectangle (0,\eps);
  \node at ({-0.5*\eps}, {0.5*\eps}) {$R_{\ast}$};

  \draw[thick, opacity=0.35] (0,0) rectangle (\half,\half);

  \fill (\half,0) circle (0.8pt);
  \fill (0,\half) circle (0.8pt);
  \fill (\half,\half) circle (0.8pt);

  \node[below] at ({0.25}, {-0.10}) {\small (b) $(R\setminus R_-)\cup R_+ \,=\, \widetilde{R}\cup R_\ast$};
\end{scope}

\end{tikzpicture}
\caption{Rectangles in the Chebyshev-center argument.}
\label{fig:chebyshev_center}
\end{figure}
    
    Then modulo the null-measure boundaries,
    $$(R\setminus R_{-})\cup R_{+}=\widetilde{R}\cup R_\ast,$$
    where $\widetilde{R}$ is the rectangle with opposite corners $(x-\varepsilon, y+\varepsilon), (x+1/2, y+1/2)$ and $R_\ast$ is the rectangle with opposite corners $(x-\varepsilon, y), (x, y+\varepsilon)$. Thus as $R_{-}\subseteq R$ and all these unions are non-overlapping, we have
    \begin{equation} \label{eq:mu_of_Rtilde}
        \mu(\widetilde{R})=\mu(R)-\mu(R_{-})+\mu(R_{+})-\mu(R_\ast). 
    \end{equation}

    We know $\mu(R)=1/4$, otherwise some rectangles in the toric quartet of $R$ would have measure exceeding 1/4, contradicting (iv).
    
    As $y$ a differentiability point of $\mu_{x+}$ and $f_{x+}(y)\leq 1/2$, by definition we know that 
    $$\mu(R_{-})\leq \frac{\varepsilon}{2}+o(\varepsilon).$$
    Similarly, for some $c>0$ with all small enough $\varepsilon>0$
    $$\mu(R_{+})\geq \left(\frac{1}{2}+c\right)\varepsilon.$$
    
    Finally, as $(x, y)$ is a differentiability point of $\mu$,
    $$\mu(R_\ast)=O(\varepsilon^2).$$
    Plugging in all these observations into \eqref{eq:mu_of_Rtilde}, we find that for small enough $\varepsilon$
    $$\mu(\widetilde{R})>1/4,$$
    contradicting (iv). Thus $f_{x-}, f_{x+}, f^{y-}, f^{y+}$ indeed equal 1/2 almost everywhere, concluding the proof.
    
\end{proof}

\begin{proof}[Proof of Theorem \ref{thm:main}]
    Take any permutons $\mu, \nu$ and any toric rectangle $R\in\mathcal{R}_{\mathbb{T}^2}$. Consider the corresponding toric quartet. If $\mu(R)-\nu(R)>1/2$, then by \cref{prop:torus_unit_square_identification} (i), we have $\mu(R_{1, 1})-\nu(R_{1, 1})>1/2$, which implies $\mu([0, 1]^2)>1$, an obvious contradiction. Thus the diameter is at most 1/2. Moreover, we know that $\ds(Id, 1-Id)=1/2$, as demonstrated by $R=[0, 1/2]^2$, implying that the diameter is actually 1/2.

    Concerning the Chebyshev center, as the diameter is 1/2, we know that the Chebyshev radius cannot be smaller than 1/4. 

    Assume now that $\mu$ is 1/2-periodic, and hence by Lemma \ref{lemma:1/2_periodicity_equivalence} $\mu(R)=hw$ for every rectangle with sidelengths $h, w$ if $h=1/2$ or $w=1/2$. Now it is sufficient to show that it is a Chebyshev center. Proceeding towards a contradiction, assume the existence of a permuton $\nu$ and a toric rectangle $R$ for which 
    \begin{equation}\label{eq:towards_contradiction}
    \mu(R)-\nu(R)>1/4.
    \end{equation}
    Switching to $R_{1, 1}$ if necessary, by Proposition \ref{prop:torus_unit_square_identification} (i), we can assume $h\leq 1/2$. Observe furthermore that if $w\leq 1/2$ as well, then by 1/2-periodicity $\mu(R)\leq 1/4$, ruling out this possibility. Hence $w>1/2$ can be assumed.

    Translating the measures and $R$ as well, we might assume that $R=[0, w]\times [0, h]$. Then we have
    $$R\subseteq \left([0, 1/2]\times [0, h]\right)\cup\left([1/2, w]\times [0, 1/2]\right),$$
    thus by 1/2-periodicity
    \begin{equation}\label{eq:mu_of_R}
    \mu(R)\leq \frac{h}{2}+\frac{1}{2}\left(w-\frac{1}{2}\right)=\frac{h+w}{2}-\frac{1}{4}.
    \end{equation}
    Considering \eqref{eq:towards_contradiction}, this implies $h+w> 1$.
    
    Consider now $R_{1, 0}=[w, 1]\times [0, h]$. By uniform marginals, we have
    $$\nu(R_{1, 0})\leq 1-w, \qquad \nu(R_{1, 0}\cup R)=h,$$
    thus
    $$\nu(R)\geq h-(1-w)=h+w-1.$$
    Pairing this with \eqref{eq:mu_of_R}, we obtain
    $$\mu(R)-\nu(R)\leq \frac{3}{4}-\frac{h+w}{2}<\frac{1}{4},$$
    as we already noted $h+w>1$. This ultimately contradicts \eqref{eq:towards_contradiction}. Thus indeed every $1/2$-periodic permuton $\mu$ is a Chebyshev center of $\Perm$ with respect to $\ds$.

    Finally, we prove that if $\mu$ is not 1/2-periodic then it cannot be a Chebyshev center. This follows immediately from (iv) of \cref{lemma:1/2_periodicity_equivalence}: the lack of 1/2-periodicity implies the existence of $R$ with sidelengths $w, 1-w$ and $\mu(R)>1/4$. However, for such $R$, the uniform measure $\nu$ on $R_{0, 1}\cup R_{1, 0}$ is a permuton for which $\nu(R)=0$. Thus $\ds(\mu, \nu)>1/4$.

\end{proof}

\section{Witnesses for the Chebyshev radius} \label{sec:examples}

In this section, we briefly discuss the content of Remark \ref{remark:witnesses}.
We are interested in which permutons $\nu$ satisfy $\ds(\nu, \mu)=1/4$ for a given Chebyshev center $\mu$.

\begin{prop} \label{prop:universal_witnesses}
Let $\nu\in \Perm$. There exists $\nu'$ with $\ds(\nu, \nu')=1/2$ if and only if there exists a toric square $Q\subseteq [0, 1]^2$ with side length 1/2 and $\nu(Q)=1/2$.
\end{prop}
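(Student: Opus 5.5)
We prove both implications, working throughout with toric rectangles and \cref{prop:torus_unit_square_identification}.

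\emph{Sufficiency.} Assume $Q$ is a toric square with side length $1/2$ and $\nu(Q)=1/2$. Its toric quartet $Q=Q_{0,0},Q_{0,1},Q_{1,0},Q_{1,1}$ consists of four $1/2\times 1/2$ squares. The uniform marginal property gives
$$\nu(Q_{0,0})+\nu(Q_{1,0})=1/2 \quad\text{and}\quad \nu(Q_{0,0})+\nu(Q_{0,1})=1/2,$$
so $\nu(Q_{1,0})=\nu(Q_{0,1})=0$ and hence $\nu(Q_{1,1})=1/2$. We then take $\nu'$ to be any permuton supported on $Q_{0,1}\cup Q_{1,0}$. A concrete choice is the uniform (Lebesgue) measure on $Q_{0,1}\cup Q_{1,0}$ with density $2$. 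Each vertical line of $\mathbb{T}^2$ meets exactly one of $Q_{0,1},Q_{1,0}$ in a segment of length $1/2$, and the same holds for horizontal lines, so both marginals are uniform. Then $\nu'(Q)=0$, and \cref{prop:torus_unit_square_identification}(iii) gives $\ds(\nu,\nu')\ge 1/2$. The diameter bound from the proof of Theorem \ref{thm:main} gives $\ds(\nu,\nu')\le 1/2$, so equality holds.

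\emph{Necessity.} Assume $\ds(\nu,\nu')=1/2$. We first need the maximum to be attained. Since permutons have continuous distribution functions, $R\mapsto \nu(R)-\nu'(R)$ is continuous in the corner coordinates of the rectangle, and the parameter set is compact. So there is a (possibly degenerate) toric rectangle $R$ with $|\nu(R)-\nu'(R)|=1/2$. After swapping $\nu$ and $\nu'$ and using \cref{prop:torus_unit_square_identification}(i), we may assume $\nu(R)-\nu'(R)=1/2$; the same then holds for $R_{1,1}$, while $\nu(R_{0,1})-\nu'(R_{0,1})=\nu(R_{1,0})-\nu'(R_{1,0})=-1/2$. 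Adding the two positive identities,
$$\nu(R)+\nu(R_{1,1}) = 1+\nu'(R)+\nu'(R_{1,1})\le 1,$$
which forces $\nu(R)+\nu(R_{1,1})=1$ and $\nu'(R)=\nu'(R_{1,1})=0$. Consequently $\nu(R_{0,1})=\nu(R_{1,0})=0$, and $\nu(R)=1/2=\nu(R_{1,1})$.

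Now let $R$ have sidelengths $w,h$. Since $\nu(R_{1,0})=0$, the horizontal strip containing $R$ gives $\nu(R)=h$ by the marginal property, and the vertical strip gives $\nu(R)=w$. Hence $h=w=1/2$, so $R$ is a toric square of side $1/2$ with $\nu(R)=1/2$, as required. This also rules out degenerate $R$, for which both values would be $0$ or $1$, not $1/2$.

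The main technical point is attainment of the maximum. If one prefers to avoid the compactness argument, take rectangles $R_n$ with $\nu(R_n)-\nu'(R_n)\to 1/2$ and run the above bounds approximately: they give $\nu(R_{n,1,0})\to 0$ and $h_n,w_n\to 1/2$. Passing to a convergent subsequence of corners and using continuity of the distribution function yields a limit square $Q$ with $\nu(Q)=1/2$.
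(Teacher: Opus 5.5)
Your proof is correct. The sufficiency direction is the paper's. You additionally verify that $\nu'$ is a permuton and invoke the diameter bound for $\ds(\nu,\nu')\le 1/2$, both of which the paper leaves implicit.

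For necessity you take a different route. The paper passes to the member of the toric quartet whose side lengths are both at most $1/2$. It then observes that if either side is shorter than $1/2$, the uniform marginals give $\nu(R),\nu'(R)\le\min(h,w)<1/2$. So $R$ must be a $1/2$-square, and $Q$ is $R$ or a neighbour of $R$, according to which of $\nu(R),\nu'(R)$ equals $1/2$.

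You instead fix the sign and run the equality case of the diameter argument from the proof of Theorem~\ref{thm:main}: you add the identities for $R$ and $R_{1,1}$ and compare with total mass $1$. This shows that $\nu'$ vanishes on $R\cup R_{1,1}$ and $\nu$ vanishes on $R_{0,1}\cup R_{1,0}$. Only then do you read off $h=w=1/2$ from the marginals.

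The paper's version is shorter and uses only the crude bound $\nu(R)\le\min(h,w)$. Yours extracts the full structure of an extremal pair: the two permutons live on complementary diagonal pairs of one quartet of $1/2$-squares. You also justify that the maximum in $\ds$ is attained, which the paper takes for granted.

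One small fix: swapping $\nu$ and $\nu'$ is not a free normalisation, because the conclusion concerns $\nu$ alone. Instead, replace $R$ by $R_{0,1}$, which flips the sign of $\nu(R)-\nu'(R)$ by \cref{prop:torus_unit_square_identification}(i). Alternatively, note that your argument shows both permutons have such a square.
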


\begin{proof}
    Assume first that such a $Q$ exists, consider its toric quartet $Q_{0,0}, Q_{0, 1}, Q_{1, 0}, Q_{1,1}$. Then by uniform marginals, $\supp \nu \subseteq Q_{0, 0}\cup Q_{1, 1}$, and we can choose $\nu'$ to be the uniform measure on $Q_{0, 1}\cup Q_{1, 0}$.

    For the other direction choose $R$ to be the witness of $\ds(\nu, \nu')=1/2$. Passing to another member of the toric quartet, we can assume that both side lengths of $R$ are at most 1/2, and if either of them is smaller than 1/2, $\nu(R),\nu'(R)<1/2$, ruling out $|\nu(R)-\nu'(R)|=1/2$. Thus $R$ is a square with side lengths 1/2, and one of the squares in its toric quartet is a proper choice for $Q$.
\end{proof}

Any permuton $\nu$ satisfying the assumption of Proposition \ref{prop:universal_witnesses} obviously has the property that $\ds(\nu, \mu)=1/4$ for every Chebyshev center $\mu$ by a triangle inequality. We call such a $\nu$ a \emph{trivial witness}. The following theorem quickly implies that only trivial witnesses are universal witnesses, i.e., only these witness the Chebyshev radius for every Chebyshev center.

\begin{theorem}\label{thm:only_trivial_witnesses_equivalence}
    The following are equivalent for a Chebyshev center $\mu\in \Perm$:
    \begin{enumerate}
        \item[(i)] $\mu$ has only trivial witnesses.
        \item[(ii)] For a toric rectangle $R$ with side lengths $h+w=1$, we have $\mu(R)=1/4$ only when $w=1/2$.
        \item[(iii)] For every toric square $Q$, $\mu(Q)>0$.
    \end{enumerate}
\end{theorem}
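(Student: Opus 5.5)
The plan is to reduce every statement to one rectangle configuration. For a $1/2$-periodic $\mu$, the inequality of Lemma~\ref{lemma:1/2_periodicity_equivalence}(iii) becomes an equality exactly when a certain small square carries no $\mu$-mass. Throughout, $\mu$ is a Chebyshev center, hence $1/2$-periodic by Theorem~\ref{thm:main}.

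\emph{Step 1 (structure of witnesses).} Let $\nu$ satisfy $\ds(\mu,\nu)=1/4$. Rectangles are closed and the parameter space is compact, so the maximum is attained on some toric $R$. Using Proposition~\ref{prop:torus_unit_square_identification}(i) and passing within the quartet, I may assume $\mu(R)-\nu(R)=1/4$ and $h\le 1/2$.

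\begin{itemize}
\item If $w\le 1/2$, then $R$ lies in a $1/2\times1/2$ square, whose $\mu$-measure is $1/4$ by periodicity. Hence $\mu(R)=1/4$ and $\nu(R)=0$.
\item If $w>1/2$, I rerun the proof of Theorem~\ref{thm:main}. Equality $1/4$ is forced in both bounds $\mu(R)\le \frac{h+w}{2}-\frac14$ and $\mu(R)-\nu(R)\le \frac34-\frac{h+w}{2}$. This gives $h+w=1$, $\mu(R)=1/4$ and $\nu(R)=0$.
\end{itemize}

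In the first bullet, I still need to rule out $w<1/2$ or $h<1/2$. I would do this by bounding $\mu(R)\le$ the measure of a $1/2$-strip part, i.e. $\mu(R)\le \min(w,h)/2<1/4$, using (ii) of the lemma. So $R$ is then a $1/2$-square.

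The conclusion of Step 1 is: every witness yields a toric $R$ with $h+w=1$, $\mu(R)=1/4$ and $\nu(R)=0$.

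\emph{Step 2: (ii)$\Rightarrow$(i) and (i)$\Rightarrow$(ii).}

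For (ii)$\Rightarrow$(i): by (ii), the rectangle from Step 1 is a $1/2$-square. Then $\nu(R)=0$ gives $\nu(R_{1,1})=1/2$ by uniform marginals. Hence $\nu$ is trivial by Proposition~\ref{prop:universal_witnesses}.

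For (i)$\Rightarrow$(ii): suppose $\mu(R)=1/4$ with $h+w=1$ and $w\ne 1/2$. Then $R_{0,1}$ is a $w\times w$ square and $R_{1,0}$ is a $(1-w)\times(1-w)$ square. Let $\nu$ be the uniform (normalized Lebesgue) measure on their union; it is a permuton with $\nu(R)=0$, so $\ds(\mu,\nu)=1/4$. I then check that $\nu$ is not trivial. If $\nu(Q)=1/2$ for a half-square $Q$, then $\supp\nu\subseteq Q\cup Q_{1,1}$. But the larger of the two squares in $\supp\nu$ has side $>1/2$ and positive density throughout, so it cannot fit inside two diagonal half-squares. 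This contradicts (i).

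\emph{Step 3: (ii)$\Leftrightarrow$(iii).} This is the key computation. Take $R=[0,w]\times[0,h]$ with $h+w=1$ and $w<1/2<h$. Cover it by
$$C=\bigl([0,w]\times[0,1/2]\bigr)\cup\bigl([0,1/2]\times[1/2,h]\bigr).$$
By periodicity $\mu(C)=\frac w2+\frac{h-1/2}{2}=\frac14$. Moreover $C\setminus R$ is the square $[w,1/2]\times[1/2,h]$ of side $s=1/2-w$. Hence $\mu(R)=1/4$ if and only if this square is $\mu$-null.

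Conversely, after translation every toric square of side $s\in(0,1/2)$ arises in this way from $w=1/2-s$. Squares of side $\ge 1/2$ contain a half-square, which has mass $1/4>0$. So (iii) fails if and only if (ii) fails.

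The main obstacle is the equality analysis in Step 1: making sure every quartet and orientation case, including $w\le1/2$ with unequal sides, really forces $h+w=1$ and $\nu(R)=0$. The rest is bookkeeping.
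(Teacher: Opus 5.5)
Your route is essentially the paper's. For (ii)$\Rightarrow$(i) you rerun the estimates from the proof of Theorem~\ref{thm:main} with equality; your separate treatment of the case $w\le 1/2$ fills in a point the paper passes over. For (i)$\Rightarrow$(ii) you use a permuton supported on $R_{0,1}\cup R_{1,0}$. For (ii)$\Leftrightarrow$(iii) you use the identity $\mu(R)=\frac14-\mu(\text{corner square of side }|w-\frac12|)$. That identity is exactly what the paper extracts from Figure~\ref{fig:chebyshev_center}; you obtain it by covering $R$ with two half-strips rather than via $(R\setminus R_-)\cup R_+$.

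Two concrete steps are wrong as written, though both are easy to fix.

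\emph{The witness in (i)$\Rightarrow$(ii) is not a permuton.} The normalized Lebesgue measure on $R_{0,1}\cup R_{1,0}$ fails to be a permuton when $w\neq 1/2$. Its $x$-marginal has density $w/(w^2+(1-w)^2)$ over the $w\times w$ square and $(1-w)/(w^2+(1-w)^2)$ over the other square, and both equal $1$ only for $w=1/2$. Instead, take density $1/w$ on the $w\times w$ square and $1/(1-w)$ on the $(1-w)\times(1-w)$ square, so the two pieces carry masses $w$ and $1-w$. Their horizontal and vertical ranges are complementary, so this is a permuton with $\nu(R)=0$. It also has positive density on the larger square, so your non-triviality argument then goes through unchanged.

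\emph{The quartet index in (ii)$\Rightarrow$(i) is wrong.} When $R$ is a half-square with $\nu(R)=0$, uniform marginals give $\nu(R_{1,0})=\nu(R_{0,1})=1/2$, not $\nu(R_{1,1})=1/2$. For the diagonal member, Proposition~\ref{prop:torus_unit_square_identification}(i) gives $\mu(R_{1,1})-\nu(R_{1,1})=\mu(R)-\nu(R)=1/4$. Since $\mu(R_{1,1})=1/4$, this forces $\nu(R_{1,1})=0$. So take $Q=R_{1,0}$ as the half-square of $\nu$-measure $1/2$ that makes $\nu$ a trivial witness.
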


Note that (ii) is equivalent to saying that the description of Chebyshev centers given by (iv) of Lemma \ref{lemma:1/2_periodicity_equivalence} holds with equality only for $h=w=1/2$.

\begin{remark}
    As (iii) of Theorem  \ref{thm:only_trivial_witnesses_equivalence} obviously holds for $\mu=\lambda$, only trivial witnesses are universal indeed.
\end{remark}

\begin{proof}[Proof of Theorem \ref{thm:only_trivial_witnesses_equivalence}]
\underline{\bf (ii) $\Rightarrow$ (i):} in the proof of Theorem \ref{thm:main} consider \eqref{eq:towards_contradiction} and the subsequent discussion, this time starting with the alternative assumption that $\mu(R)-\nu(R)=1/4$ for a rectangle $R$ with side lengths $h,w$. In that proof, we see that $h\leq 1/2$ can be assumed and then one 
    obtains 
    \begin{equation} \label{eq:muR_upper_bound}
    \mu(R)\leq \frac{h+w}{2}-\frac{1}{4},
    \end{equation}
    from which $h+w\geq 1$ is deduced. Moreover,
    $$\mu(R)-\nu(R)\leq \frac{3}{4}-\frac{h+w}{2}.$$
    This is compatible with $\mu(R)-\nu(R)=1/4$ only if $h+w=1$, but then \eqref{eq:muR_upper_bound} implies $\mu(R)\leq \frac{1}{4}$. Thus actually $\mu(R)=\frac{1}{4}, \nu(R)=0$. By assumption, this implies $h=w=1/2$, $\nu$ is a trivial witness. 
    
\underline{\bf (i) $\Rightarrow$ (ii):} if $\mu(R)=1/4$ for $R$ with side lengths $h+w=1$, then the uniform measure on $R_{0, 1}\cup R_{1, 0}$ is a permuton 1/4 apart from $\mu$. However, it is not a trivial witness once $w\neq 1/2$.

\underline{\bf (ii) $\Rightarrow$ (iii):} Recall the configuration of Figure \ref{fig:chebyshev_center} with $R_{*}=Q$. Consider the computation \eqref{eq:mu_of_Rtilde}. Once $\mu$ is a Chebyshev center, we have $\mu(R_{-})=\mu(R_{+})$, and hence $\mu(\widetilde{R})=\mu(R)-\mu(Q)=1/4-\mu(Q)$. By (ii), $\mu(\widetilde{R})<1/4$ and hence $\mu(Q)>0$.

\underline{\bf (iii) $\Rightarrow$ (ii):} We can repeat the previous argument focusing on Figure \ref{fig:chebyshev_center}, this time choosing $R=\widetilde{R}$.
\end{proof}

From the previous proof, we also obtain the following characterization:

\begin{theorem}
    For a Chebyshev center $\mu$, the permuton $\nu$ is a witness of the extremal distance $\ds(\mu, \nu)=1/4$ if and only if there exists a toric rectangle $R$ with side lengths $h+w=1$ such that $\mu(R)=1/4, \nu(R)=0$.
\end{theorem}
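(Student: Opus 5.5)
The plan is to prove the two directions separately, reusing the chain of inequalities from the proof of Theorem \ref{thm:main} (as already exploited in the (ii) $\Rightarrow$ (i) part of Theorem \ref{thm:only_trivial_witnesses_equivalence}).

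\emph{Sufficiency.} Suppose $R$ is a toric rectangle with $h+w=1$, $\mu(R)=1/4$ and $\nu(R)=0$. Then $|\mu(R)-\nu(R)|=1/4$. By part (iii) of Proposition \ref{prop:torus_unit_square_identification}, toric rectangles may be used in computing $\ds$, so $\ds(\mu,\nu)\geq 1/4$. On the other hand, $\mu$ is a Chebyshev center and Theorem \ref{thm:main} gives radius $1/4$, so $\ds(\mu,\nu)\leq 1/4$. Hence equality holds.

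\emph{Necessity.} Assume $\ds(\mu,\nu)=1/4$.
\begin{enumerate}
\item I first check that the maximum defining $\ds$ is attained. The map $R\mapsto \mu(R)-\nu(R)$ is continuous in the corner coordinates, because permutons give zero mass to horizontal and vertical lines. The parameter set is compact, so some toric rectangle $R$ satisfies $|\mu(R)-\nu(R)|=1/4$.
\item Using part (i) of Proposition \ref{prop:torus_unit_square_identification}, I pass to the diagonal partner $R_{1,1}$ if needed, which flips the sign. This lets me assume $\mu(R)-\nu(R)=1/4$ and $h\leq 1/2$.
\item If also $w\leq 1/2$, then $1/2$-periodicity gives $\mu(R)\leq hw\leq 1/4$. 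This forces $h=w=1/2$, $\mu(R)=1/4$ and $\nu(R)=0$, which already has the required form.
\item Otherwise $w>1/2$. After translating $R$ to $[0,w]\times[0,h]$, the covering argument gives $\mu(R)\leq \frac{h+w}{2}-\frac14$.
\item For $\nu$ I use the marginal bound $\nu(R)\geq h+w-1$, obtained from $\nu(R_{1,0})\leq 1-w$ and $\nu(R\cup R_{1,0})=h$. Together with the previous step this gives
\[
\mu(R)-\nu(R)\leq \tfrac34-\tfrac{h+w}{2}.
\]
Equality with $1/4$ then requires $h+w\leq 1$.
\item The bound on $\mu(R)$ together with $\mu(R)\geq 1/4$ requires $h+w\geq 1$. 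Hence $h+w=1$, then $\mu(R)\leq 1/4$, and so $\mu(R)=1/4$ and $\nu(R)=0$.
\end{enumerate}

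The main care point is the sign normalization in step 2. The quartet partner $R_{1,1}$ has sidelengths $1-h$ and $1-w$, so the condition $h+w=1$ is preserved for whichever rectangle the argument ends with. Since the conclusion is stated for $R$ itself, I must make sure the rectangle produced is the one on which $\mu$ is large and $\nu$ vanishes. The argument above delivers exactly this, because the normalization is done first.
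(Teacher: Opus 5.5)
Your route is the paper's own. Sufficiency follows from the radius being $1/4$. Necessity reruns the estimates from the proof of Theorem~\ref{thm:main} in the equality case, exactly as in the (ii)$\Rightarrow$(i) part of Theorem~\ref{thm:only_trivial_witnesses_equivalence}. Two intermediate claims are false as written, although both are easily repaired.

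\textbf{Step 2 (sign normalization).} Proposition~\ref{prop:torus_unit_square_identification}(i) with $k=l=1$ gives the factor $(-1)^{2}=1$. So the diagonal partner $R_{1,1}$ has the \emph{same} sign of $\mu-\nu$ as $R$; only the side-sharing partners $R_{1,0}$ and $R_{0,1}$ flip it. Hence if $\mu(R)-\nu(R)=-1/4$, passing to $R_{1,1}$ achieves nothing.

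The correct normalization goes as follows. The signs on $(R_{0,0},R_{1,0},R_{0,1},R_{1,1})$ are $(s,-s,-s,s)$. So the members on which $\mu-\nu=1/4$ form either the pair $\{R_{0,0},R_{1,1}\}$ or the pair $\{R_{1,0},R_{0,1}\}$. In each pair the heights are $h$ and $1-h$, so you can always pick a member with $\mu(R)-\nu(R)=1/4$ and height at most $1/2$. Since you single this out as the main care point, it should be stated correctly.

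\textbf{Step 3 (the case $w\le 1/2$).} The bound $\mu(R)\le hw$ does not follow from $1/2$-periodicity. Lemma~\ref{lemma:1/2_periodicity_equivalence}(ii) gives $\mu(R)=hw$ only when one side equals $1/2$. For the paper's own example, the uniform measure on the graphs of $x$ and $x+1/2 \bmod 1$, one has $\mu([0,1/4]^2)=1/8>1/16$.

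What periodicity does give is the following. After translation, $R=[0,w]\times[0,h]\subseteq[0,w]\times[0,1/2]$, and the latter has measure $w/2$. Hence $\mu(R)\le w/2$, and symmetrically $\mu(R)\le h/2$. Combined with $\mu(R)\ge 1/4$ (from $\nu(R)\ge 0$), this still forces $h=w=1/2$, $\mu(R)=1/4$ and $\nu(R)=0$, so your conclusion survives.

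With these two fixes, steps 4--6 and the sufficiency direction coincide with the paper's argument. Your explicit check that the maximum is attained, and your separate treatment of the case $w\le 1/2$, fill in details the paper leaves implicit.
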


Although this characterization has a similar nature to the definition itself, it is useful as it provides a simpler point of view by significantly restricting which rectangles should be considered as maximizers in the definition of $\ds$.

\begin{example}[Singular Chebyshev centers with and without nontrivial witnesses]
    Consider the functions $f_1(x)=x, f_2(x)=x+1/2 \mod 1$ and let $\mu$ be the uniform measure on the union of their graphs. Then $\mu$ is 1/2-periodic, hence it is a Chebyshev center. However, (iii) of Theorem \ref{thm:only_trivial_witnesses_equivalence} is trivially violated, implying the existence of nontrivial witnesses. More explicitly, observe that for any $\frac{1}{2}\leq w\leq \frac{3}{4}$, we have $\mu(R)=1/4$ for $R=[0, w]\times [0, 1-w]$. Now if $\nu(R)=0$ then $\ds(\nu, \mu)=1/4$, and typically $\nu$ is not a trivial witness. For example, $\nu$ being the uniform measure on $R_{0, 1}\cup R_{1, 0}$ is a suitable choice.

    In contrast, if we consider $f_n(x)=x+\alpha_n \mod 1$ for a dense subset $\{\alpha_n\}_{n=1}^{\infty}\subseteq [0, 1]$ and consider the permuton which puts mass $2^{-n}$ uniformly on the graph of $f_n$, we obtain a singular Chebyshev center satisfying (iii) of Theorem \ref{thm:only_trivial_witnesses_equivalence}, thus having only trivial witnesses.
\end{example}

\subsection*{Acknowledgements}

The author is thankful to Boglárka Gehér and Ágnes Cs. Kúsz for the helpful discussion.

\printbibliography

\end{document}